\tikzset{vnode/.style= {draw,circle,fill=black!50, minimum width=4pt, inner sep=0pt}}
\newtheorem{theorem}{Theorem} 
\newtheorem{lemma}[theorem]{Lemma}
\newtheorem{corollary}[theorem]{Corollary}
\newtheorem{proposition}[theorem]{Proposition}
\theoremstyle{definition}
\newtheorem{example}[theorem]{Example}
\newtheorem{openprob}{Problem}
\numberwithin{equation}{section}
\newcommand{\aut}[1]{\ensuremath{\operatorname{Aut}\,#1}\xspace}
\renewcommand{\ker}[1]{\ensuremath{\operatorname{ker}\,#1}\xspace}
\newcommand{\orcycle}[1]{\ensuremath{#1}\xspace}
\newcommand{\cycle}[1]{\ensuremath{\mathbf{#1}}\xspace}
\begin{document}
%\setpagewiselinenumbers
%\begin{frontmatter}
\title{On a representation of the automorphism group of a graph in a unimodular group}
\author[I. Est\'elyi]{Istv\'an Est\'elyi}
\address[I. Est\'elyi]{NTIS, University of West Bohemia, Technick\'{a} 8, 30100 Plze\v{n} 3, Czech republic}
\address[I. Est\'elyi]{Faculty of Information Technology, University of Pannonia, Egyetem u. 10., 8200 Veszprém, Hungary}
\email[I. Est\'elyi]{estelyi.istvan@mik.uni-pannon.hu}
\author[J. Karab\'a\v{s}]{J\'an Karab\'a\v{s}}
\address[J. Karab\'a\v{s}]{Faculty of Natural Sciences, Matej Bel University, Tajovsk\'eho~40, 97401 Bansk\'a Bystrica, Slovakia}
\address[J. Karab\'a\v{s}]{NTIS, University of West Bohemia, Technick\'{a} 8, 30100 Plze\v{n} 3, Czech republic}
\email[J. Karab\'a\v{s}]{karabas@savbb.sk}
\author[R. Nedela]{Roman Nedela}
\address[R. Nedela]{Faculty of Applied Sciences, University of West Bohemia, Technick\'{a} 8, 30100 Plze\v{n} 3, Czech republic}
\address[R. Nedela]{Mathematical Institute, Slovak Academy of Sciences, \v{D}umbierska~1, 97411~Bansk\' a Bystrica, Slovakia}
\email[R. Nedela]{nedela@ntis.zcu.cz}
\author[A. Mednykh]{Alexander Mednykh}
\address[A. Mednykh]{Sobolev Institute of Mathematics, Pr. Koptyuga 4, Novosibirsk, 630090, Russia}
\address[A. Mednykh]{Novosibirsk State University Pirogova 2, Novosibirsk, 630090, Russia}
\email[A. Mednykh]{smedn@mail.ru}

\begin{abstract}  
We investigate a representation of the automorphism group of a connected graph $X$ in the group of unimodular matrices $U_\beta$ of dimension $\beta$, where $\beta$ is the Betti number of graph $X$. We classify the graphs for which the automorphism group does not embed into $U_\beta$. It follows that if $X$ has no pendant vertices and $X$ is not a simple cycle, then the representation is faithful and $\aut{X}$ acts faithfully on $H_1(X,\mathbb{Z})$. The latter statement can be viewed as a discrete analogue of a classical Hurwitz’s theorem on Riemann surfaces of genera greater than one.
\end{abstract}

\keywords{
graph, automorphism, unimodular matrix}
%% keywords here, in the form: keyword \sep keyword

%% PACS codes here, in the form: \PACS code \sep code

%% MSC codes here, in the form: \MSC code \sep code
%% or \MSC[2008] code \sep code (2000 is the default)
\subjclass[2010]{05C25 (Primary),~20F65,~20C10,~20H25 (Secondary)}

%\end{frontmatter}
%\linenumbers
\maketitle

\section{Introduction}
\noindent{}Given any topological space $X$ and a finite group $G$ of homeomorphisms of $X$, consider the associated action of $G$ on the first homology group $H_1(X, \mathbb{Z})$.
%Given any topological space $X$ and a group $G$ of homeomorphisms of $X$, the associated action of $G$ on the first homology group $H_1(X,\mathbb{Z})$ gives a representation of $G$ by unimodular $\beta\times\beta$ matrices. 
It is natural to ask when this representation is faithful. Hurwitz’s theorem says when $X$ is a Riemann surface of genus greater than one, the answer is yes,
 see Farkas and Kra~\cite[Theorem~V.3.1,~p.~270]{fkra92}.
 A. M. MacBeath in~\cite{macb73} proved that the action remains faithful on the homology group $H_1(X, \mathbb{Z}_p)$, $p > 2$, but fails for $H_1(X, \mathbb{Z}_2)$, see also~\cite[p.~276]{fkra92}. 
%A. M. MacBeath in~\cite{macb73}  proved that the action remains faithful on the homology group $H_1(\mathcal{S}, \mathbb{Z}_p),\,p>2,$  but fails for  $H_1(\mathcal{S},\mathbb{Z}_2)$, see also~\cite[p.~276]{fkra92}. 
In this paper we consider the case where $X$ is a finite graph.
A motivation to investigate it  comes from
the applications of the matrix representation of the automorphism group of a graph in the lifting automorphism problem~\cite{malnivc2004elementary, malnic20009, siran2001coverings}.
Recently we have found an application of the result in investigation of structure of Jacobian of a graph, see \cite{baker2007riemann}
for definitions and further details.

Throughout  we assume that $X$ is a \emph{simple connected} graph. By a \emph{dart} of
$X$ we mean an edge endowed with one of the two possible orientations. Thus every edge $uv$ gives rise to two darts $(u,v)$ and $(v,u)$ distinguished just by the orientation. If $x$ is dart, then the oppositely oriented dart underlying the same edge will be denoted by $x^{-1}$. An \emph{unoriented simple cycle} is a connected subgraph of $X$, where every vertex has degree $2$. Any simple cycle has two orientations $C$, $C^{-1}$ which we call \emph{oriented (simple) cycles}. 
%In what follows, we freely use both oriented and unoriented simple cycles. 
%In particular, we use the above mentioned convention for fundamental cycles defined with respect to a spanning tree of $X$.

Let $T$ be a spanning tree of $X$. For every co-tree edge $e_i$
choose one of the two underlying darts, and denote it by $x_i$.  
Clearly, each $x_i$ determines a unique simple oriented cycle $\orcycle{C}_i=(x_i,y_1,y_2,\dots,y_k)$, where $y_1y_2\dots y_k$ is the unique path in $T$ joining the terminal vertex of $x_i$ to the initial vertex of $x_i$. The cycles $C_i$, $i=1,\ldots,\beta$ will be called \emph{fundamental oriented cycles}. They form a basis of \emph{first homology group} $H_1(X)$ which is isomorphic to free abelian group of rank $\beta$, where $\beta(X)$ is the Betti number.

An \emph{automorphism}
of a graph is a permutation of the vertices taking
adjacent vertices onto adjacent vertices. In this note we consider as a rule the induced action of graph automorphisms on the set of darts. Hence automorphisms
of a graph are considered as permutations of the set of darts.
 For every automorphism $f\in \aut{X}$ we can construct a matrix $M=M_T(f)$ with
entries in $\{1,0,-1\}$ as follows. Fix a linear order of the
co-tree edges $e_1,e_2,\dots,e_\beta$, where $\beta=\beta(X)=e(X)-v(X)+1$. The automorphism $f$ of the graph $X$ induces a linear transformation of $H_1(X)$ which we can treat as a vector space of dimension $\beta$ (with integer scalars). Then $M_T(f)$ is the usual matrix of a linear transformation with respect to the basis given by the fundamental oriented cycles $C_i$ determined by co-tree darts of $T$.  In particular, the entry $m_{i,j}$ of $M_T(f)$ takes value $1$ or $-1$
if and only if $x_i$ or $x_i^{-1}$ is a dart traversed by $f(C_j)$, respectively. Finally, choosing a different spanning tree $T'$  can be viewed as a change of basis of the vector space $H_1(X)$. Hence, $M_{T'}(f)$ is a conjugate of $M_T(f)$ by a matrix representing the coordinates for the new cycles in terms of the old cycles. In particular, the
issue of faithfulness is obviously independent of the choice of spanning tree.

%***********************
%
%
%Let $T$ be a spanning tree of $X$.
%For every automorphism $f\in \aut{X}$ we can construct a matrix $M=M_T(f)$ with
%entries in $\{1,0,-1\}$ as follows. Fix a linear order of the
%co-tree edges $e_1,e_2,\dots,e_\beta$, where $\beta=e(X)-v(X)+1$, the Betti number of $X$. For every co-tree edge $e_i$
%choose one of the two underlying darts, and denote it by $x_i$. Denote by 
%$D_T=\{x_1, x_2,\dots, x_\beta\}$ the set of co-tree darts.
%Clearly, each $x_i$ determines a unique simple oriented cycle $\orcycle{C}_i=(x_i,y_1,y_2,\dots,y_k)$, where $y_1y_2\dots y_k$ is the unique path in $T$ joining the terminal vertex of $x_i$ to the initial vertex of $x_i$. Set $\operatorname{Row}(i)=D_T\cap D(f(\orcycle{C}_i)$). Set $M_T(f)=(m_{i,j})$, to be a $\beta\times\beta$ matrix, defined by setting $m_{i,j}=1$ if $x_i\in \operatorname{Row}(i)$, $m_{i,j}=-1$ if $x_i^{-1}\in \operatorname{Row}(i)$, and $m_{i,j}=0$ otherwise.

Recall that a square matrix is \emph{unimodular} if its determinant is $\pm1$. 
The unimodular matrices of dimension $\beta$ (with standard multiplication) form  a group, here  denoted $U_\beta$. By convention, if $\beta=0$, the group $U_\beta$ is trivial.
The following result is well-known.

\begin{theorem}[\cite{siran2001coverings}] The assignment $\Theta_T: f\mapsto M_T(f)$ defines a homomorphism of $\aut{X}$ into the group of unimodular matrices.
\end{theorem}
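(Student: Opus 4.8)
The plan is to show two things: that $\Theta_T$ is well defined (each $f$ genuinely induces a linear automorphism of $H_1(X)$ whose matrix in the fundamental-cycle basis has entries in $\{1,0,-1\}$), and that it respects composition, i.e. $M_T(fg)=M_T(f)M_T(g)$. Faithfulness is not required for this statement, only that the image lands in $U_\beta$; the latter follows because each $M_T(f)$ represents an invertible map of the free abelian group $H_1(X,\mathbb{Z})$ to itself, and the matrix of such an automorphism over $\mathbb{Z}$ must have determinant $\pm 1$.

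First I would make precise the action of $f$ on homology. Since $f$ is a graph automorphism, it permutes darts and hence carries any oriented closed walk to another oriented closed walk, inducing a group homomorphism $f_*$ on $H_1(X,\mathbb{Z})$. For a fundamental cycle $C_j=(x_j,y_1,\dots,y_k)$, the image $f(C_j)$ is the closed walk $(f(x_j),f(y_1),\dots,f(y_k))$. I would then expand $f(C_j)$ in the fundamental basis. The key combinatorial fact is that for any oriented closed walk $W$, its homology class equals $\sum_i c_i[C_i]$ where $c_i$ counts the signed number of times $W$ traverses the co-tree dart $x_i$ (i.e. $+1$ for each traversal of $x_i$ and $-1$ for each traversal of $x_i^{-1}$), because the tree darts contribute nothing to homology and each co-tree dart is associated to exactly one fundamental cycle. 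Applied to $W=f(C_j)$, this gives exactly the stated description of the entry $m_{i,j}$. Since $C_j$ is a \emph{simple} cycle, $f(C_j)$ is again a simple cycle and traverses any given edge at most once, so each $m_{i,j}\in\{1,0,-1\}$, confirming the entries are as claimed.

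Next I would verify the homomorphism property. Because $f_*$ is induced by the genuine topological (or simplicial) self-map $f$, functoriality gives $(fg)_*=f_*\circ g_*$ directly; translating to matrices with respect to the fixed basis yields $M_T(fg)=M_T(f)\,M_T(g)$. If one prefers to avoid invoking functoriality as a black box, the same identity can be checked combinatorially by tracking how signed dart-traversal counts compose, but the clean route is simply to observe that $f\mapsto f_*$ is a homomorphism from $\aut{X}$ into $\operatorname{Aut}(H_1(X,\mathbb{Z}))$ and that $\Theta_T$ is the composition of this with the isomorphism $\operatorname{Aut}(H_1(X,\mathbb{Z}))\cong U_\beta$ induced by the chosen basis. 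In particular $M_T(\id)=I$ and $M_T(f^{-1})=M_T(f)^{-1}$, so each $M_T(f)$ is invertible over $\mathbb{Z}$, hence unimodular.

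The main obstacle, and the step deserving the most care, is the well-definedness of $f_*$ on homology classes rather than on representing walks: I must confirm that homologous closed walks map to homologous closed walks, so that the induced map on $H_1(X,\mathbb{Z})$ is unambiguous, and that the expansion coefficients really are integers lying in $\{1,0,-1\}$ for the images of \emph{fundamental} cycles. Everything else (linearity, multiplicativity, landing in $U_\beta$) then follows formally. I would therefore devote the bulk of the argument to the dart-traversal counting lemma above and to justifying that $f$ preserves the subgroup of boundaries, after which the homomorphism and unimodularity conclusions are immediate.
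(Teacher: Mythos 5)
Your proposal is correct, and in fact the paper offers no proof of this statement at all: it is quoted as a well-known result from \cite{siran2001coverings}, so there is nothing in-paper to compare against. Your argument is the standard one and all its steps go through: the signed co-tree-dart counting lemma is justified because $f(C_j)-\sum_i m_{i,j}C_i$ is an integer $1$-cycle supported on the tree $T$, and a tree carries no nonzero cycles; the bound $m_{i,j}\in\{1,0,-1\}$ follows since the automorphic image of a simple cycle is a simple cycle, traversing each edge at most once; and unimodularity follows cleanly from $M_T(f)M_T(f^{-1})=I$ over $\mathbb{Z}$, which forces $\det M_T(f)=\pm 1$. One remark: the step you single out as the main obstacle---well-definedness of $f_*$ on homology classes and preservation of boundaries---is actually vacuous here. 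A graph is a one-dimensional complex, so there are no $2$-cells, $B_1=0$, and $H_1(X,\mathbb{Z})$ is literally the group $Z_1=\ker\partial_1$ of integer $1$-cycles; since $f$ permutes darts it acts linearly on the chain group $C_1$ and commutes with $\partial_1$, hence maps $Z_1$ to $Z_1$ with no quotient to worry about. So the bulk of the care you propose to invest there can be redirected to the counting lemma, which is the only substantive point.
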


Observe that if $X$ is a tree with a non-trivial automorphism group, 
or if $X$ is a simple cycle, then $\Theta=\Theta_T$ is not injective for any spanning tree $T$ of $X$.
In what  follows, 
%we shall study 
%conditions on the graph $X$, under which the homomorphism $\Theta_T$ is injective. In other words,
we consider the following problem:

\smallskip
\noindent{\bf Problem.} For which connected graphs $X$ does the homomorphism $\Theta_T$ determine an embedding of $\aut{X}$ into the group $U_\beta$? 
\smallskip

\section{Main result}
\noindent{}We will see that the answer to the posed problem depends on the structure of blocks of $X$. 
To this end, we recall a few related basic results on the $2$-connectivity of graphs, that can be found e.g. in monographs~\cite[Chapter~5]{bondy2011graph} and~\cite[Chapter~3]{diestel2005graph}. A graph with at least two vertices is \emph{$2$-connected}, if it cannot be disconnected by the removal of a single vertex, or in other words, if it has no \emph{cutvertices}. A \emph{bridge} in a connected graph is an edge whose removal disconnects the graph. A bridgeless connected graph is \emph{$2$-edge-conected}. Clearly, a $2$-connected graph is $2$-edge-connected.

By a \emph{block} of a graph $X$ we mean a maximal $2$-connected induced subgraph of $X$. A block with at least three vertices is called \emph{nontrivial}. Blocks of an arbitrary simple graph determine a decomposition of the set of edges. More precisely, the following properties hold true:

\begin{enumerate}[\hspace{1em}(P1)]
\item any two blocks of $X$ have at most one vertex in common,
\item every edge of $X$ belongs to exactly one block of $X$, 
\item each cycle of $X$ is contained in exactly one block of $X$.  
\end{enumerate}
The \emph{block tree} $\mathcal{B}=\mathcal{B}(X)$ of a connected graph $X$ is a bipartite graph with the vertex set $V(\mathcal{B})$ formed by the blocks of $X$ and by the cutvertices of $X$. A cutvertex $v\in V(\mathcal{B})$ is adjacent to a block $B\in V(\mathcal{B})$ if and only if $v\in B$. It can be easily seen that $\mathcal{B}$ is a tree~\cite[Proposition~3.1.2]{diestel2005graph}. All leafs of $\mathcal{B}$ are blocks. It is well-known~\cite[Chapter~3]{diestel2005graph} that the centre of $\mathcal{B}$ is a single vertex, 
that can either be a block or a cutvertex of $X$. In case it is a block, it is called the \emph{central block} of $X$. Since an automorphism $f\in\aut{X}$ maps blocks onto blocks and permutes the cutvertices, $f$ induces an automorphism $f^*$ of $\mathcal{B}(X)$. Clearly, $f^*$ fixes the central vertex of $\mathcal{B}$ and $f^*(v) = f(v)$ if $v$ is a cutvertex.

\begin{lemma}\label{lem:nova1}
Let $T$ be an arbitrary spanning tree of $X$, let $\orcycle{C}_i\neq \orcycle{C}_j$ be fundamental oriented cycles with respect to $T$ and let
$f\in\ker \Theta_T$. Then $\orcycle{C}_i$, $i=1,2,\ldots,\beta(X)$ are fixed by $f$ setwise. Moreover, if $\orcycle{C}_i\cap \orcycle{C}_j\neq\emptyset$, then $\orcycle{C}_i\cup \orcycle{C}_j$ is fixed pointwise.
\end{lemma}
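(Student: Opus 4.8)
The plan is to read off the consequences of the hypothesis directly from the combinatorial meaning of the matrix entries. First I would note that $f\in\ker\Theta_T$ says exactly that $M_T(f)=I$, i.e.\ the induced linear map on $H_1(X)$ fixes every basis vector, so $[f(\orcycle{C}_j)]=[\orcycle{C}_j]$ for each $j$. Since the $i$-th coordinate of a cycle in the basis $\{\orcycle{C}_i\}$ is the signed number of times it traverses the co-tree dart $x_i$, and since $f(\orcycle{C}_j)$ is again a \emph{simple} oriented cycle (the image of a simple cycle under a graph automorphism), the vanishing of all coordinates except the $j$-th --- which equals $+1$ --- forces $f(\orcycle{C}_j)$ to traverse the co-tree edge $e_j$ exactly once in the direction $x_j$ and to use no other co-tree edge. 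A simple cycle through a single co-tree edge together with tree edges is uniquely the fundamental cycle $\orcycle{C}_j$; hence $f(\orcycle{C}_j)=\orcycle{C}_j$ as oriented cycles. This already gives the first assertion, and moreover shows that $f$ preserves the orientation of each $\orcycle{C}_i$.

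For the second assertion I would first record the shape of the intersection. Writing $\orcycle{C}_i=e_i\cup P_i$ and $\orcycle{C}_j=e_j\cup P_j$ with $P_i,P_j$ the corresponding tree paths, and using that each fundamental cycle contains exactly one co-tree edge (its own), one gets $\orcycle{C}_i\cap\orcycle{C}_j=P_i\cap P_j$; being the intersection of two paths in the tree $T$, this is itself a path $P$, nonempty by hypothesis. Because $f$ fixes both $\orcycle{C}_i$ and $\orcycle{C}_j$ setwise, it fixes $P=\orcycle{C}_i\cap\orcycle{C}_j$ setwise as well, and since $f$ preserves the orientation of $\orcycle{C}_i$, its restriction $f|_{\orcycle{C}_i}$ is a rotation of the cycle. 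The whole statement then reduces to showing that $f|_{\orcycle{C}_i}$, and likewise $f|_{\orcycle{C}_j}$, is the identity: a nontrivial rotation of a cycle has no fixed vertex, so it suffices to exhibit a single fixed vertex on $\orcycle{C}_i$.

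The crux --- and the step I expect to be the main obstacle --- is locating that fixed vertex. If $P$ is a single vertex $v$, then $\{v\}=\orcycle{C}_i\cap\orcycle{C}_j$ is $f$-invariant, so $f(v)=v$ and we are done. If $P$ has length at least one it has two endpoints $a,b$, which are precisely the vertices where $\orcycle{C}_i$ branches off $P$; as $f$ fixes $P$ setwise it permutes $\{a,b\}$. I would then rule out the swap $f(a)=b,\ f(b)=a$ using orientation: inside the oriented cycle $\orcycle{C}_i$ the arc $P$ runs from $a$ to $b$, so an orientation-preserving $f$ exchanging its endpoints must carry $P$ onto the \emph{complementary} arc $\orcycle{C}_i\setminus\mathrm{int}(P)$, which contains $e_i$; this contradicts $f(P)=P$, since $e_i\notin P$. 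Hence $f$ fixes $a$ and $b$, the rotation $f|_{\orcycle{C}_i}$ is trivial, and by the same argument so is $f|_{\orcycle{C}_j}$. Thus $f$ fixes every vertex of $\orcycle{C}_i\cup\orcycle{C}_j$, and as $X$ is simple this means $\orcycle{C}_i\cup\orcycle{C}_j$ is fixed pointwise.
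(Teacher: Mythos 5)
Your proof is correct and takes essentially the same route as the paper's: $M_T(f)=I$ forces $f(C_j)$ to be a simple cycle traversing only the co-tree dart $x_j$, hence $f(C_j)=C_j$ with orientation preserved, and then the tree-path $P=C_i\cap C_j$ is shown to be pinned down, which rigidifies both cycles. The only difference is cosmetic and lies in the finishing move: the paper fixes a dart of $C_i$ at an endvertex of $P$ and invokes that a cycle automorphism fixing a dart is the identity, whereas you note that $f|_{C_i}$ is a rotation and rule out the endpoint swap via orientation --- effectively an explicit write-up of the step the paper compresses into ``$P$ is fixed pointwise.''
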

\begin{proof}
Since $M_T(f)=\operatorname{id}$, $f(\orcycle{C}_i)$ traverses exactly one co-tree dart $x_i$ determining $\orcycle{C}_i$. Thus $f(\orcycle{C}_i)=\orcycle{C}_i$. Assume $\orcycle{C}_i\cap \orcycle{C}_j$ is a path $P$, possibly of length zero. By the previous statement, $P$ is fixed pointwise. Clearly, there exist a dart in $C_i-P$ incident to an endvertex of $P$. Such a dart is  fixed by $f$. An automorphism of a simple cycle $C$ fixing a dart fixes $C$ pointwise.  
\end{proof}

\begin{corollary}\label{cor:newcor1}
If $f\in\ker \Theta_T$, then $f(B)=B$ for each nontrivial block $B$ in $X$.
\end{corollary}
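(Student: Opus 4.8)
The plan is to prove Corollary~\ref{cor:newcor1} by leveraging Lemma~\ref{lem:nova1}, using property (P3) that every cycle lives in exactly one block. First I would fix an automorphism $f\in\ker\Theta_T$ and a nontrivial block $B$. Since $B$ is $2$-connected with at least three vertices, it contains a cycle, hence $\beta(B)\geq 1$. The key observation is that the fundamental cycles $\orcycle{C}_i$ that happen to lie inside $B$ are, by Lemma~\ref{lem:nova1}, fixed setwise by $f$; and since each such cycle is a subgraph of $B$, the darts it traverses belong to $B$. So the natural goal is to show that these fundamental cycles collectively cover all of $B$, which would force $f(B)=B$ because $f$ fixes each of them setwise and $B$ is the union of its edges.

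The main step I would carry out is to argue that $B$ is connected as a union of those fundamental cycles that are contained in it. Concretely, I would restrict the spanning tree $T$ to the block: $T\cap B$ is a spanning tree of $B$ (this uses that $B$ is $2$-connected and that removing a block's interior edges from a spanning tree leaves a spanning tree of the block — a standard fact one can state directly). The co-tree darts of $T$ lying in $B$ then generate exactly the fundamental cycles contained in $B$, and these span $H_1(B)$. Because $B$ is $2$-connected, every edge of $B$ lies on some cycle of $B$, and by (P3) that cycle is a $\mathbb{Z}$-combination of the fundamental cycles sitting inside $B$; in particular every edge of $B$ is traversed by at least one fundamental cycle $\orcycle{C}_i\subseteq B$. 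Each such $\orcycle{C}_i$ is fixed setwise by $f$, so $f$ maps edges of $B$ to edges of $B$, giving $f(B)\subseteq B$, and applying the same to $f^{-1}$ yields $f(B)=B$.

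The step I expect to be the main obstacle is making rigorous the claim that every edge of $B$ is covered by some fundamental cycle contained in $B$, rather than by some fundamental cycle that merely passes through $B$. The subtlety is that a fundamental cycle $\orcycle{C}_i$ of the whole graph $X$ need not lie inside a single block in general — but (P3) rescues us, since the cycle $\orcycle{C}_i$, being a single cycle, is contained in exactly one block. Thus the fundamental cycles partition according to which block contains them, and the ones inside $B$ are genuinely a basis for $H_1(B)$. Once this bookkeeping is clean, the conclusion is immediate. An alternative, slightly slicker route would avoid homology entirely: take any edge $uv$ of $B$; since $B$ is $2$-connected it lies on a cycle $C\subseteq B$; express $[C]$ in the fundamental basis, note that only cycles $\orcycle{C}_i\subseteq B$ can appear with nonzero coefficient (again by (P3)), and observe that the edge $uv$ must be traversed by at least one of them, which is then fixed setwise by $f$, forcing $f(uv)\in B$.
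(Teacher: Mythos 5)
Your proof is correct, and you have handled the genuinely delicate points: $T\cap B$ is indeed a spanning tree of $B$ (the $T$-path between two vertices of $B$ cannot leave $B$, since it would have to exit and re-enter through the same cutvertex), and your (P3) bookkeeping is exactly right --- because a graph has no $2$-cells, homologous cycles coincide as $1$-chains, so an edge of a cycle $C\subseteq B$, carrying coefficient $\pm 1$, must be traversed by some $\orcycle{C}_i$ appearing with nonzero coefficient, and that $\orcycle{C}_i$ lies in $B$. Where you differ from the paper: the paper gives no proof at all, treating the statement as immediate from Lemma~\ref{lem:nova1}, and the intended one-liner is lighter than your argument. The paper has already observed, just before Lemma~\ref{lem:nova1}, that every automorphism maps blocks onto blocks; granted that, it suffices to exhibit a \emph{single} fundamental cycle inside $B$: any cycle of the nontrivial block $B$ contains some co-tree edge $e_i$, and by (P2) and (P3) its fundamental cycle $\orcycle{C}_i$ lies in $B$; since $f(\orcycle{C}_i)=\orcycle{C}_i$ for $f\in\ker{\Theta_T}$, the block $f(B)$ shares the cycle $\orcycle{C}_i$ with $B$, and (P1) (two blocks meet in at most one vertex) forces $f(B)=B$. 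Your edge-covering argument proves strictly more --- every edge of $B$ lies on an $f$-invariant fundamental cycle contained in $B$ --- which the corollary does not need, though it is close in spirit to what Lemma~\ref{lem:newlema2} later exploits; the short route buys brevity by outsourcing the work to the block-permutation observation, while yours is self-contained at the level of edges (your $f^{-1}$ trick even avoids invoking that $f$ permutes blocks) at the cost of the spanning-tree restriction lemma and the basis claim for $H_1(B)$, both true but requiring the justification you sketched.
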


\begin{lemma}\label{lem:newlema2}
A maximal $2$-edge-connected subgraph of a connected graph $X$ that is not a simple cycle is fixed pointwise by $f\in\aut{X}$.
\end{lemma}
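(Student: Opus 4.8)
The plan is to realize $Y$ (the maximal $2$-edge-connected subgraph) as a union of fundamental cycles that all lie inside $Y$, and then to feed pairs of intersecting cycles into Lemma~\ref{lem:nova1}. Throughout I read the hypothesis as $f\in\ker \Theta_T$, which is what the preceding results supply (for an arbitrary automorphism the assertion is false, e.g.\ for $X=K_4$). First I would observe that $\ker \Theta_T$ is independent of the spanning tree, since a change of tree conjugates $M_T(f)$ by a fixed invertible matrix; hence I am free to choose $T$ to suit $Y$. As $Y$ is connected I extend a spanning tree $T_Y$ of $Y$ to a spanning tree $T$ of $X$, so that $T\cap Y=T_Y$. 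Then each co-tree dart of $X$ that lies in $Y$ determines a fundamental cycle consisting of that dart together with the unique $T_Y$-path joining its endpoints, and this cycle is contained in $Y$. Call these cycles $C_1,\dots,C_m$ with $m=\beta(Y)$.

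The geometric heart of the argument is that $C_1,\dots,C_m$ cover $Y$. Every co-tree edge of $Y$ sits on its own fundamental cycle, so only the tree edges need attention: given $e\in T_Y$, deleting $e$ splits $T_Y$ into two parts, and since $Y$ is bridgeless $Y-e$ is still connected, so some co-tree edge $e'$ of $Y$ joins the two parts; the $T_Y$-path closing $e'$ into its fundamental cycle must then cross $e$, placing $e$ on that cycle. Thus every vertex and edge of $Y$ lies on some $C_i$, i.e.\ $\bigcup_i C_i=Y$. It is here that the hypothesis ``$Y$ is not a simple cycle'' is used: it is equivalent to $m=\beta(Y)\ge 2$, guaranteeing at least two distinct fundamental cycles, which is precisely what will let me pass from setwise to pointwise fixing.

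Finally I apply Lemma~\ref{lem:nova1}. Since $Y=\bigcup_i C_i$ is connected and $m\ge 2$, no cycle can be disjoint from all the others: if $C_i$ met none of the $C_j$, $j\ne i$, then $V(C_i)$ and the vertex set of the remaining cycles would partition $V(Y)$ with no edge across (every edge lies on a single $C_k$), contradicting the connectedness of $Y$. Hence each $C_i$ shares a vertex with some $C_j$, and Lemma~\ref{lem:nova1} gives that $C_i\cup C_j$, and in particular $C_i$, is fixed pointwise by $f$. Ranging over $i$, the whole of $Y=\bigcup_i C_i$ is fixed pointwise, as required. I expect the covering step to be the main obstacle: guaranteeing, via $2$-edge-connectivity together with the adapted spanning tree, that every edge of $Y$ really does lie on a fundamental cycle contained in $Y$. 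This is where the bridgeless hypothesis is indispensable and where a poorly chosen tree would derail the proof.
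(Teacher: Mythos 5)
Your proof is correct and is essentially the paper's own argument made explicit: the paper likewise covers the subgraph by fundamental cycles lying inside it and applies Lemma~\ref{lem:nova1} to intersecting pairs, phrasing the intersection pattern as a connected \emph{intersection graph} of cycles whose connectivity comes from bridgelessness. The details you supply --- reading the hypothesis as $f\in\ker{\Theta_T}$, choosing a spanning tree of $X$ extending one of $Y$ so that the relevant fundamental cycles stay inside $Y$, and the explicit bridgeless covering argument for tree edges --- are precisely the steps the paper's terse proof leaves implicit, and they are all sound (your weaker observation that no cycle is isolated suffices in place of the paper's full connectivity-plus-induction, since Lemma~\ref{lem:nova1} already fixes $C_i\cup C_j$ pointwise in one shot).
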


\begin{proof} 
Let $Z$ be a subgraph of $X$ satisfying the assumptions. Let $Y$ be a graph whose vertices are oriented fundamental cycles of $Z$. Two such cycles are adjacent if they have nontrivial intersection. Since $Z$ has no bridges, $Y$ is connected. By the assumptions $Y$ contains more than one vertex. Using induction based on Lemma~\ref{lem:nova1}, $f$ is the identity on $Z$.
\end{proof}

\begin{proposition}\label{prop:nova2} 
Let $X$ be a simple connected graph without vertices of degree one. Then either the homomorphism $\Theta_T\colon \aut{X}\to U_\beta$ is injective or $X$ is a simple cycle.
\end{proposition}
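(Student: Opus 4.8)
The plan is to prove the contrapositive in kernel form: it suffices to show that whenever $X$ is not a simple cycle, every $f\in\ker\Theta_T$ equals $\id$. I would organize the whole argument around the decomposition of $X$ into its maximal $2$-edge-connected subgraphs (call them the \emph{$2$-edge-components}) together with the bridges joining them; contracting each component to a point turns the bridges into a tree $\mathcal{T}$ on which $f$ acts. First I would dispose of the base case in which $X$ itself is $2$-edge-connected: then either $X$ is a simple cycle (the stated exception) or $X$ is a maximal $2$-edge-connected subgraph that is not a cycle, so Lemma~\ref{lem:newlema2} gives $f=\id$ at once. Hence I may assume $X$ has at least one bridge, so $\mathcal{T}$ has at least two nodes and every component is incident to a bridge.

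The crux is the $2$-edge-components that happen to be simple cycles, since for any other component $Z$ Lemma~\ref{lem:newlema2} already forces $f$ to fix $Z$ pointwise. For a cyclic component $Z$ I would note that, because every bridge lies in every spanning tree, $T\cap Z$ is a spanning tree (a Hamiltonian path) of $Z$, so $Z$ is exactly one fundamental cycle $C_i$. Since $M_T(f)=\id$ records the coefficient of $C_i$ in $f(C_i)$ as $+1$, Lemma~\ref{lem:nova1} shows $f$ maps $Z$ to itself preserving orientation; that is, $f|_Z$ is a rotation. A rotation of a cycle is the identity the moment it fixes a single vertex, so the entire problem reduces to exhibiting one $f$-fixed vertex on each cyclic component.

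I would produce these fixed vertices by induction along $\mathcal{T}$, rooted arbitrarily and processed from the leaves inward. A leaf component is not a single vertex (that would be a degree-one vertex, excluded by hypothesis), so it is cyclic or a larger $2$-edge-connected graph; in the cyclic case its unique attachment vertex $v$ is forced to be fixed, because $f$ carries the unique bridge at $Z$ to a bridge at $f(v)\in Z$ and $Z$ bears only that one bridge, whence $f(v)=v$ and the rotation $f|_Z$ collapses to $\id$. Once a component is fixed pointwise, each incident bridge is fixed as an edge (it is the only edge at a fixed vertex that leaves the component), and since its endpoint inside the component is fixed, so is the opposite endpoint; this transports a fixed vertex to the neighbouring component. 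Proceeding inward, every non-leaf cyclic component inherits a fixed attachment vertex from an already-processed child and hence has trivial rotation, every non-cyclic component is pinned by Lemma~\ref{lem:newlema2}, and single-vertex components are fixed directly; thus $f$ fixes all components pointwise and all bridges, so $f=\id$.

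The step I expect to be the genuine obstacle is precisely ruling out nontrivial rotations of the cyclic $2$-edge-components, and it is here that both hypotheses are consumed. Orientation preservation, coming from $M_T(f)=\id$, is what reduces each cyclic automorphism to a rotation rather than a reflection; and the absence of degree-one vertices is what guarantees that leaf components are cycle-bearing rather than homologically invisible pendant trees, so that their attachment vertices can be pinned down and fed into the induction. Dropping the latter hypothesis genuinely breaks the statement—for instance a four-cycle with a pendant edge at two opposite vertices admits the half-turn that swaps the pendants, which preserves the cycle orientation and therefore lies in $\ker\Theta_T$—so the argument must exploit exactly the feature that every hanging piece carries homology.
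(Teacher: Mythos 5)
Your strategy is sound and ends up close in spirit to the paper's, but it is organized around a different decomposition: you work with the tree of maximal $2$-edge-connected subgraphs joined by bridges, whereas the paper uses the block tree $\mathcal{B}(X)$ whose nodes are blocks and cutvertices. The paper handles the bridged case by showing that the induced action $f^*$ on $\mathcal{B}$ fixes every leaf (leaves are nontrivial blocks, fixed setwise by Corollary~\ref{cor:newcor1}), invoking the fact that a tree automorphism fixing all leaves is the identity to get $f^*=\id$, and then pinning each cycle block by a fixed cutvertex via Lemma~\ref{lem:nova1}; you instead run an explicit leaf-to-root induction transporting fixed vertices across bridges. Your route has two small advantages: since Lemma~\ref{lem:newlema2} is stated for \emph{maximal $2$-edge-connected} subgraphs, your decomposition applies it directly and bypasses Corollary~\ref{cor:newcor1} entirely; and your observation that every bridge lies in every spanning tree, so that each cyclic $2$-edge-component is exactly one fundamental cycle and hence preserved together with its orientation (making $f$ restricted to it a rotation), makes explicit the orientation-preservation step that the paper compresses into the terse sentence ``Since $f$ fixes $u\in B$, by Lemma~\ref{lem:nova1}, $f\big|_B=\id$.''

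One local justification is wrong as stated, though the proof is easily repaired. In the transport step you claim each incident bridge is fixed because ``it is the only edge at a fixed vertex that leaves the component.'' This is false in general: a single vertex of a pointwise-fixed component may carry two or more bridges (a cutvertex with several branches hanging off it), and knowing that $f$ fixes that vertex only tells you that $f$ \emph{permutes} the bridges at it; a priori $f$ could swap two branches attached at the same vertex. Your induction survives with one more line: in the leaf-to-root order, at the moment you transport across the bridge toward the parent, every other bridge at that vertex leads into an already-processed subtree and is therefore fixed edgewise (both of its endpoints are fixed), so injectivity of $f$ on edges forces the remaining bridge to map to itself. Alternatively, quote Lemma~\ref{lem:nova1} directly: each branch hanging at the vertex contains a fundamental cycle (here the no-degree-one hypothesis is consumed once more), and since all fundamental cycles are fixed setwise, $f$ cannot interchange branches. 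With that patch the argument is complete; your closing counterexample (a $4$-cycle with two opposite pendant edges) is correct and is an instance of the ``periodic unicyclic'' case (iii) of Theorem~\ref{thm:class}.
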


\begin{proof} 
If $X$ is $2$-edge-connected, the statement follows from Lemma~\ref{lem:newlema2}. 

Suppose now that $X$ has bridges. Consider the block tree $\mathcal{B} = \mathcal{B}(X)$. By assumptions, $\mathcal{B}(X)$ has at least $3$ vertices. Since there are no vertices of degree one, every leaf in $\mathcal{B}$ represents a nontrivial block. By Corollary~\ref{cor:newcor1}, every leaf of $\mathcal{B}(X)$ is fixed. Since any automorphism  of a tree fixing all the leafs is the identity, we have $f^*=\operatorname{id}$. Hence, $f$ fixes all bridges of $X$. By Lemma~\ref{lem:newlema2}, $f$ also fixes blocks of $X$ that are not cycles pointwise. If a block $B$ is a cycle and $X$ has at least two blocks, then the unique simple path $P$ in $\mathcal{B}$ from $B$ to any block $B'$ is fixed by $f$. By definition of $\mathcal{B}$, $P$ traverses a cutvertex $u\in B$. Since $f$ fixes $u\in B$, by Lemma~\ref{lem:nova1}, $f\big| _B=\operatorname{id}$. Therefore, $f=\operatorname{id}$ on $X$.
\end{proof}

Let $w\in V(X)$ be a cutvertex of $X$ such that there exist a bridge $B_1$ and a nontrivial block $B_2\ncong K_2$, both containing $w$. By a \emph{pendant tree} of $X$ rooted at $w$ we mean a subgraph of $X$ induced by $\{w\}\cup V(F_w)$, where $F_w$ is the maximal acyclic subgraph of $X-w$. A pendant tree will be called \emph{rigid}, if its automorphism group is trivial. A unicyclic graph will be called \emph{periodic} if it admits a non-trivial automorphism $f$ rotating the unique cycle.

\begin{example}\label{ex:uniper}
All periodic unicyclic graphs can be constructed as follows. Choose two positive integers $n,k$ such that $n>2$, $k\mid n$, and $k<n$. Further, choose a sequence of rooted trees $S_0,S_1,\ldots,S_{k-1}$ with the corresponding roots $w_0, w_1,\ldots,w_{k-1}$. Let $\cycle{C} = (v_0,v_1,\ldots,v_{n-1})$ be a simple cycle with vertices $v_i$, $i\in\{0,1,\ldots,n-1\}$. Form a graph $X$ by taking the cycle $\cycle{C}$ and attach to every vertex $v_j$ of $\cycle{C}$ a copy of the tree $S_i$, where $j\equiv i\pmod{k}$, by identifying  $v_j$ with the root $w_i$ of $S_i$, see e.g. Figure~\ref{fig:uniper}. 

It is obvious that $X$ admits an automorphism $\varrho$ of order $\frac{n}{k}>1$ rotating $\cycle{C}$ with period $k$. Hence, $\varrho$ is a nontrivial element in the kernel of $\Theta_T$, for any spanning tree $T$ of $X$. Therefore, $\Theta_T$ is not injective.
% Since the unimodular group $U_1$ has order $2$
\begin{figure}[H]
\centering
\begin{tikzpicture}[thick,scale=0.8]
\node[vnode] (a) at (0,-0.5) {};
\node[vnode] (b) at (0,0.5) {};
\draw (a) -- (b);
\node[draw=none] at (0,-2) {$S_0$};
\end{tikzpicture}
\hspace{2cm}
\begin{tikzpicture}[thick,scale=0.8]
\node[vnode] (c) at (0,-1) {};
\node[vnode] (d) at (0,0) {};
\node[vnode] (e) at (0,1) {};
\draw[vnode] (c) -- (d) -- (e);
\node[draw=none] at (0,-2) {$S_1$};
\end{tikzpicture}
\hspace{2cm}
\begin{tikzpicture}[thick,scale=0.8]
\node[vnode] (a1) at (-0.5,-0.5) {};
\node[vnode] (a2) at (-0.5,0.5) {};
\node[vnode] (a3) at (0.5,0.5) {};
\node[vnode] (a4) at (0.5,-0.5) {};
\node[vnode] (b1) at (-1,-1) {};
\node[vnode] (b2) at (1,1) {};
\node[vnode] (c1) at (-1,1) {};
\node[vnode] (c2) at (-1.5, 1.5) {}; 
\node[vnode] (c3) at (1,-1) {};
\node[vnode] (c4) at (1.5,-1.5) {};
\node[draw=none] at (0,-2) {$X$};

\draw (a1) -- (a2) -- (a3) -- (a4) -- (a1);
\draw (b1) -- (a1);
\draw (b2) -- (a3);
\draw (c2) -- (c1) -- (a2);
\draw (c4) -- (c3) -- (a4);
\end{tikzpicture}
\caption{An example of a periodic unicyclic graph $X$ with $n=4$ and $k=2$.}\label{fig:uniper}
\end{figure}
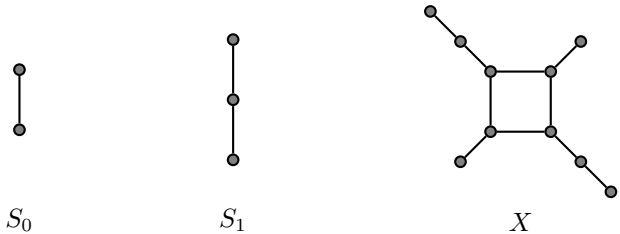

\end{example}

\noindent{}Now we are ready to prove the main result.

\begin{theorem}\label{thm:class}
Let $X$ be a connected simple graph and let $T$ be a spanning tree of $X$.
Then $\Theta_T$ is not injective if and only if at least one of the following statements holds.
\begin{enumerate}[(i)]
\item $X$ is a tree and $\aut{X}\neq 1$,
\item $X$ contains a pendant tree $S$ such that $\aut{S}\neq 1$, %(action of \aut{X} on $S$ is non-trivial),
\item $X$ is periodic unicyclic.
\end{enumerate}
\end{theorem}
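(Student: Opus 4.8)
The plan is to prove the two implications separately: the "if" direction by direct construction of kernel elements, and the "only if" direction by reducing to Proposition~\ref{prop:nova2} through the $2$-core of $X$.

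For the "if" direction I would check that each of (i)--(iii) yields a nontrivial element of \ker \Theta_T. If $X$ is a tree then $\beta=0$, so $U_\beta$ is trivial and \emph{every} nontrivial automorphism lies in the kernel, giving (i). For (ii), a nontrivial root-preserving automorphism $\sigma$ of a pendant tree $S$ rooted at $w$ extends to an automorphism $f$ of $X$ acting as $\sigma$ on $S$ and as the identity elsewhere; since $S$ is acyclic and meets the rest of $X$ only in $w$, the map $f$ fixes every cycle of $X$, hence every fundamental cycle, so $f\in\ker \Theta_T$. Case (iii) is exactly Example~\ref{ex:uniper}, where the rotation $\varrho$ already exhibits a nontrivial kernel.

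For the "only if" direction, let $f\in\ker \Theta_T$ be nontrivial. If $X$ is a tree we are in case (i), so assume $X$ is not a tree and pass to the $2$-core $X'$ obtained by iteratively deleting vertices of degree one. I would first record the structural facts that $f(X')=X'$, that every cycle of $X$ lies in $X'$, and that $T$ restricts to a spanning tree $T'=T\cap X'$ of $X'$ with the \emph{same} co-tree edges (pendant-tree edges are bridges and hence belong to $T$). Consequently the fundamental cycles of $X$ and $X'$ coincide and $f|_{X'}\in\ker \Theta_{T'}$. Since $X'$ has minimum degree at least two, Proposition~\ref{prop:nova2} applies and splits the analysis into two cases.

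If $\Theta_{T'}$ is injective, then $f|_{X'}=\operatorname{id}$, so $f$ fixes the $2$-core pointwise and can act nontrivially only on the pendant trees hanging off $X'$; the branch that $f$ moves then carries a nontrivial automorphism, placing us in case (ii). If instead $X'$ is a simple cycle, then $X$ is unicyclic: either $f|_{X'}$ rotates the cycle nontrivially, so that $X$ is periodic unicyclic and we are in case (iii), or $f|_{X'}=\operatorname{id}$ and again $f$ moves some pendant tree rooted on the cycle, giving (ii). The step I expect to be the main obstacle is the last matching in the injective case: one must verify that the pendant branch actually moved by $f$ qualifies as a pendant tree in the precise sense of the definition, with its root at a cutvertex of the required type. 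Here the $2$-core may itself contain bridges and trivial blocks, so some care is needed in locating the root on a nontrivial block (a difficulty that does not arise in the unicyclic subcase, where every pendant tree is attached directly to the central cycle).
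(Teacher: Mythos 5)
Your proposal is correct and takes essentially the same route as the paper: the paper likewise strips the pendant trees (your $2$-core $X'$), notes that $\ker{\Theta_T}$ restricts injectively to the reduced graph, and concludes via Proposition~\ref{prop:nova2}. You are in fact more explicit than the paper at its tersest points --- the paper never spells out the spanning-tree restriction $T'=T\cap X'$ nor the simple-cycle alternative of Proposition~\ref{prop:nova2} (which it silently absorbs into the failure of (iii), exactly your Case B) --- and the worry you flag about whether a moved branch is rooted at a cutvertex lying on a nontrivial block is a genuine looseness in the paper's definition of a pendant tree that the paper's own four-sentence proof glosses over as well.
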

\begin{proof}
The direction $(\Leftarrow)$ is straightforward. If $X$ is a tree, then $\beta(X)=0$ and $U_\beta$ is trivial, hence $\Theta_T$ is not injective whenever $\aut{X}\neq 1$. If (ii) holds, then there exists an automorphism $\tau\in\ker{\Theta_T}$ fixing the complement of the pendant tree $S$ pointwise and acting nontrivially on $S$. The case (iii) is treated in Example~\ref{ex:uniper}.

$(\Rightarrow)$ In order to derive a contradiction
we assume that there exists a nontrivial automorphism $f\in\ker \Theta_T$ and none of (i), (ii), or (iii) holds true. In particular, we assume that $X$ is not a tree.

If $X$ contains some pendant trees $S_0$, $S_1$,\ldots, $S_{k-1}$, we reduce $X$ to a graph $X'$ by removing the trees $S_0$,\ldots,$S_{k-1}$. Since each of $S_i$ is rigid, we have
$\ker \Theta_T\big|_{X'}\geq \ker\Theta_T$. By Proposition~\ref{prop:nova2} $\ker \Theta_T\big|_{X'}$ is trivial.
\end{proof}

\section{Concluding remarks}
\noindent{}The aim of this section is to discuss several problems related to the main result. The first problem reads as follows. 
\begin{openprob}
What is the smallest dimension $d$ such that $\aut{X}$ is faithfully represented by $d\times d$ unimodular matrices?
\end{openprob}

Theorem~\ref{thm:class} characterises graphs $X$ for which $d\leq\beta(X)$. On the other hand, the automorphism group of a complete graph $K_n$, $n>3$, $\aut{K_n}\cong\mathrm{S}_n$, is faithfully represented by permutation matrices of type $n\times n$, while $\beta(K_n)=(n-1)(n-2)/2$. The other extreme is a cycle $C_p$, $p$ is prime, which automorphism group $\aut{C_p}\cong \mathrm{D}_{p}$, with faithful representation by $p\times p$ matrices while \mbox{$\beta(C_p)=1$}.

Another motivation comes from representation of groups. Given a group $G$ one can use Frucht's theorem ~\cite{Frucht1939} and ask for the graph $X$ such that $\aut{X}\cong G$. One can ask for `the smallest unimodular representation' of the group $G$.
\begin{openprob}
Given a finite group $G$ find the smallest graph $X$ such that $G\leq\aut{X}$ is faithfully represented by unimodular matrices.
\end{openprob} 
One can take $X$ to be a Cayley graph for $G$ and then ask the following.
\begin{openprob}
What is the smallest Betti number $\beta(X)$ through all Cayley graphs $X$ for $G$ such that the representation of $\aut{X}=G$ is faithful?
\end{openprob}
The question is related as well to the problem how to find the smallest generating set of $G$, giving the smallest valence of $X$.

A short analysis of our argumentation shows that the proof of faithfulness of the unimodular representation of $\aut{X}$ does not impose any requirements on the field over which the matrices are considered, except that $1\neq -1$. It follows that the representation remains faithful if the matrices $M_T(f)$ are considered over fields of characteristic $p>2$.     

\section*{Acknowledgements}
\noindent{}The authors express thanks to the anonymous referee for his/her useful comments which helped a lot to improve the presentation.

The first three authors were supported by the grant GACR 20-15576S. The first author acknowledges the financial support of Széchenyi 2020 under the EFOP-3.6.1-16-2016-00015 grant. The second and third author were supported by the grant No.~APVV-19-0308 of Slovak Research and Development Agency. The fourth author was supported by Mathematical Center in Akademgorodok under agreement No.~075-15-2019-1613 with the Ministry of Science and Higher Education of the Russian Federation.

\bibliographystyle{abbrv}
\bibliography{jacrep_literature}

\begin{thebibliography}{1}

\bibitem{baker2007riemann}
M.~Baker and S.~Norine.
\newblock {R}iemann-{R}och and {A}bel-{J}acobi theory on a finite graph.
\newblock {\em Advances in Mathematics}, 215(2):766--788, 2007.

\bibitem{bondy2011graph}
A.~Bondy and U.~Murty.
\newblock {\em Graph Theory}, volume 244 of {\em Graduate Texts in
  Mathematics}.
\newblock Springer Verlag, London, 2011.

\bibitem{diestel2005graph}
R.~Diestel.
\newblock {\em Graph {T}heory}, volume 101 of {\em Graduate {T}exts in
  {M}athematics}.
\newblock Springer Verlag, New York, 2005.

\bibitem{fkra92}
H.~Farkas and I.~Kra.
\newblock {\em Riemann Surfaces}, volume~71 of {\em Graduate {T}exts in
  {M}athematics}.
\newblock Springer Verlag, New York, 2nd edition, 1992.

\bibitem{Frucht1939}
R.~Frucht.
\newblock Herstellung von graphen mit vorgegebener abstrakter gruppe.
\newblock {\em Compositio Mathematica}, 6:239--250, 1939.

\bibitem{macb73}
A.~M. MacBeath.
\newblock {Action of Automorphisms of a Compact Riemann Surface on the First
  Homology Group}.
\newblock {\em Bulletin of the London Mathematical Society}, 5(1):103--108,
  1973.

\bibitem{malnivc2004elementary}
A.~Malni{\v{c}}, D.~Maru{\v{s}}i{\v{c}}, and P.~Poto{\v{c}}nik.
\newblock Elementary abelian covers of graphs.
\newblock {\em Journal of Algebraic Combinatorics}, 20(1):71--97, 2004.

\bibitem{malnic20009}
A.~Malni{\v c}, R.~Nedela, and M.~{\v S}koviera.
\newblock Lifting graph automorphisms by voltage assignments.
\newblock {\em European Journal of Combinatorics}, 21(7):927--947, 2000.

\bibitem{siran2001coverings}
J.~{\v S}ir{\'a}{\v n}.
\newblock Coverings of graphs and maps, orthogonality, and eigenvectors.
\newblock {\em Journal of Algebraic Combinatorics: An International Journal},
  14(1):57--72, 2001.

\end{thebibliography}

\end{document}